 \newenvironment{code}
 {\VerbatimEnvironment \begin{Verbatim}[breaklines, breakafter=*, breaksymbolsep=0.5em, baselinestretch=0.8]}{\end{Verbatim}}
 \newif\ifstartedinmathmode
\newcommand\encircled[1]{%
  \relax\ifmmode\startedinmathmodetrue\else\startedinmathmodefalse\fi%
  \tikz[baseline,anchor=base]{%
  \node[draw,circle,outer sep=0pt,inner sep=.2ex]
    {\ifstartedinmathmode$#1$\else#1\fi};}%
}
\def\n{\noindent}
\def\Mav{M_{av}}
\newtheorem{lem}{Lemma}
\newtheorem{lemma}[lem]{Lemma}
\newtheorem{thm}{Theorem}
\newtheorem{theorem}[thm]{Theorem}
\newtheorem{conj}{Conjecture}
\newtheorem*{theoremA}{Theorem A}
\newtheorem*{lemmaLTE}{Lemma LTE}
\def\\{\cr}
\def\({\left(}
\def\){\right)}
\def\[{\left[}
\def\]{\right]}
\def\<{\langle}
\def\>{\rangle}
\begin{document}

\title{On the average value of the minimal Hamming multiple}

\date{\today}

\pagenumbering{arabic}

\author{Eugen~ Ionascu\footnote{Department of Mathematics, Columbus State University, Columbus, GA, USA
 email: ionascu@columbusstate.edu}\qquad Florian~Luca\footnote{School of Maths, Wits University, Johannesburg, South Africa and Centro de Ciencias Matem\'aticas UNAM, Morelia, Mexico  email: florian.luca@wits.ac.za}   \qquad Thomas~ Merino \footnote{Department of Computer Science, Columbus State University, Columbus, GA, USA email: merino$\_$thomas@students.columbusstate.edu } }

\maketitle

\begin{abstract}
We find a nontrivial upper bound on the average value of the function $M(n)$ which associates to every positive integer $n$ the minimal Hamming weight of a multiple of $n$. Some new results about the equation $M(n)=M(n')$ are given.
\end{abstract}

{\small {\bf AMS Subject Classification:} 11A07, 11N37.}

{\small {\bf Keywords:} multiplicative order, Wieferich primes, applications of sieve methods}

\section{Introduction}

The number of $1$'s in the binary expansion of the positive integer $n$ is customarily denoted by $s_2(n)$ and sometimes called the Hamming weight of $n$. In what follows, the set of natural numbers $\mathbb N$ is going to be used with the understanding that zero is not included.  For every natural number $n$, we let 
$$
M(n):=\min_{k\ge 1} \{s_2(kn)\}.
$$

This sequence was introduced in OEIS in 2003 and logged as {\href{https://oeis.org/A086342}{A086342}}]   (see \cite{OEIS1}). It has $\num{10000}$ terms there and we calculated $M(n)$ for all $n\le 2^{18}=\num{262144}$ (see \href{https://ejionascu.ro/papers/M(n)upto2to18.txt}{data}).  Table~\ref{table:1} below lists a few values of $M(n)$:

\begin{table}[h!]
\centering
\begin{tabular}{||c |c c c c c c c c c c c c c c c c||} 
 \hline
 $n$ & 1 & 2 & 3 &  \boxed{4} & 5 &  \boxed{6} &  \boxed{7 } & \boxed{8} & 9 & 10 &11& 12 & 13 &  14  &\boxed{15} & \boxed{16}\\ [0.5ex] 
 \hline
 $M(n)$ & 1 & 1 & 2 & 1& 2 & \encircled{2} &  \encircled{3} &  1 & 2 & 2 &2 & 2 & 2 &  3  &   \encircled{4 } & 1\\ 
 \hline \hline
 $n$ & 17 & 18 & 19 & 20 & 21 & 22 &23 & 24 & 25 &26 &27 &28 &29& \boxed{30} & \boxed{31} & \boxed{32} \\
 $M(n)$ & 2 &  2 & 2 &  2 & 3 & 2 & 3&  2 &  2& 2 &  2 & 3 &  2&   \encircled{4} & \encircled{5} & 1 \\
  \hline
\end{tabular}
\caption{Table with the first 32 values of $M(n)$.}
\label{table:1}
\end{table}

It is clear that $M(2^k)=1$, $M(2^k+1)=M(2^k+2)=2$ for every non-negative integer $k\ge 2$.  One can see from this table that, the values of $M(p)$ for non-Mersenne primes $p$ are small ($2$ or $3$).  We used squares and circles in the above table to point out the values of $M(\cdot)$ around the powers of $2$. Writing $2n$ in base $2$ requires a shifting of the writing of $n$ followed by a $0$. This means $M(2n)=M(n)$ and so the values of $M(\cdot)$ are determined by the values taken on odd numbers. 
The average of all 32 values of $M(\cdot)$ in this table is $\frac{70}{32}=\frac{35}{16}= 2.1875$. We looked at the average of all values $M(n)$ in the range  $n\le 2^{18}$ and it is quite surprisingly low: $3.11846$.

We are interested in the limit  (if it exists)
$$\lim_{n\to \infty} \frac{1}{n}\sum_{k=1}^n M(k),$$

\n or at least an upper bound of the average sequence $
{\Mav}(n):=\frac{1}{n}\sum_{k\le n} M(k)
$. 
 
\vspace{0.1in} 
\n By Stolz–Ces\`{a}ro Lemma, if the following limit exists 

\begin{equation}\label{limitforMn}
\lim_{n\to \infty} \frac{1}{2^n}\sum_{k=2^n}^{2^{n+1}-1} M(k)=L,
\end{equation}

\n then the limit of ${\Mav}(2^n)$ must exist and be  $L$. Let us introduce the notation ${C_{av}}(n)$ for such averages, that is, ${C_{av}}(n):=\frac{1}{2^{n-1}}\sum_{k=2^{n-1}}^{2^{n}-1} M(k)$. Numerical evidence shows that ${C_{av}}(n)$ is strictly increasing and it is bounded above by some constant $L$ (the limit in (\ref{limitforMn})) which appears to be $\pi$. For example, for our data $${C_{av}}(18)\approx 3.13184<\pi.$$ The exact values of $C_{av}(n)$ for our data and a graphical depiction  (Figure~\ref{fig:oddvalues}) of these points are included next: 

$$\left\{1,\frac{3}{2},2,\frac{9}{4},\frac{39}{16},\frac{21}{8},\frac{45}{16},\frac{183}{64},\frac{47}{16},\frac{761}{256},\frac{3079}{1024},\frac{3111}{1024},\frac{6253}{2048},\frac{25213}{8192},\frac{25327}{8192},\frac{101849}{32768},\frac{12781}{4096},\frac{410497}{131072}\right\},$$

\vspace{0.2in}

\begin{figure}[h!]
 \epsfig{file=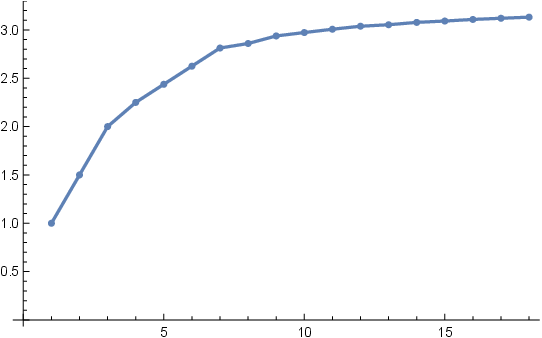,height=3in,width=6in}
  \caption{The sequence $\{C_{av}(n)\}_n$ for $n=1,2,\ldots, 18$. }
  \label{fig:oddvalues}
\end{figure}

\vspace{0.2in}
\n If we consider the average of the values of $M(\cdot) $ only for odd numbers, using the notation 
$${CO_{av}}(n):=\frac{1}{2^{n-1}}\sum_{k\ge 2^{n-1},\ k \ \mathrm{odd}}^{2^{n}-1} M(k),$$

\n then ${C_{av}}(n)=\frac{C_{av}(n-1)}{2}+\frac{CO_{av}(n)}{2}$. This can be turned into a telescopic sum relation $$2^nC_{av}(n)=2^{n-1}C_{av} (n-1)+2^{n-1}CO_{av}(n)$$ that gives

\begin{equation}\label{eq1oda}
C_{av}(n)=\frac{1+CO_{av}(1)+2CO_{av}(2)+\cdots +2^{n-1}CO_{av}(n)}{2^n},
\end{equation}

\n which is a weighted average giving more weight for the latest terms. 
Let us observe that in the right-hand side of (\ref{eq1oda}), at the numerator, we have exactly the sum of all 
values of $M(\cdot)$ for the odd integers $k$ in the range $k\in [1,2^n-1]$. Also, from  (\ref{eq1oda}), again by Stolz–Ces\`{a}ro Lemma, we get that $\lim_{n} C_{av}(n)$ exists and it is the same, if  $\lim_{n} CO_{av}(n)$ exists. We conclude that the problem reduces to estimating the values of $M(\cdot)$ for odd integers. 

\vspace{0.1in}

Very recently (see \cite{UZ}), Problem $B_2$ from the Putnam exam of 2023 asked to calculate $M(2023)$. An example of a multiple of $2023$ with the smallest Hamming weight is $2^{280}+2^5+2^0$. There are infinitely many such numbers  but 
the easiest to get is perhaps $N=2^{277}+2^8+1$ which, as a curiosity, has the  property

$$\frac{N}{2023}=\num{120036}\underset{71 \ \text{digits}}{\underbrace{\dots\dots\dots\dots }}\num{2023}.$$

The fact that $M(2023)$ cannot be $2$ follows from the fact that $7\mid 2023$ and the only powers of $2$ modulo $7$ are $1$, $2$, and $4$. No two of them can sum to $0$ modulo $7$. A more general result has been proved by Luca \cite{Lu}. This was  generalized by Schinzel \cite{Sch}, who showed that $M(2^k-1)=k$ for any $k\ge 1$.  Numbers like these, for which $M(n)=s_2(n)$  are called by Stolarky in~\cite{sto} {\it sturdy}  numbers.  It is shown there that, for instance
$$n=\frac{2^{sk}-1}{2^s-1},\ s,k\in \mathbb N,$$
\n  are sturdy. 

\vspace{0.1in}

Because every multiple of $ab$ is a multiple of $a$, we see that, in fact, more can be said:

$$\max\{M(a),M(b)\}\le M(ab)$$

\n for every $a,b\in \mathbb N$. This relation is pushing the bounds of $\Mav (n)$
up, but we have some opposite results too. First, let's start with a classic:

\vspace{0.1in}

 \begin{theoremA} (Hasse \cite{hasse})  The set of primes  $p$ that divide $2^n + 1$ for some integer $n > 0$  has density $\frac{17}{24}\approx 0.708333$.
\end{theoremA}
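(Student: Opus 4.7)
The plan is to reduce the statement to Chebotarev's density theorem applied to a suitable tower of Kummer--cyclotomic extensions of $\Q$. First, I would observe that an odd prime $p$ divides $2^n+1$ for some $n\ge 1$ if and only if $-1$ lies in the cyclic subgroup generated by $2$ in $(\Z/p\Z)^*$; equivalently, the multiplicative order $d_p:=\mathrm{ord}_p(2)$ is even (the ``only if'' direction uses that $2^n\equiv -1\pmod p$ forces $d_p\mid 2n$ but $d_p\nmid n$). So it suffices to prove that the complementary set $\{p\colon d_p\text{ is odd}\}$ has natural density $7/24$.

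Writing $p-1=2^a m$ with $m$ odd and $a\ge 1$, the order $d_p$ is odd exactly when $d_p\mid m$, that is, when $2$ is a $2^a$-th power in $(\Z/p\Z)^*$. I would partition primes by $a=v_2(p-1)$ and, for each $a\ge 1$, introduce the fields
$$
L_a := \Q(\zeta_{2^a},\,2^{1/2^a}),\qquad M_a := \Q(\zeta_{2^{a+1}},\,2^{1/2^a}).
$$
The condition ``$p\equiv 1\pmod{2^a}$ and $2$ is a $2^a$-th power mod $p$'' is precisely that $p$ splits completely in $L_a$, while further imposing ``$p\equiv 1\pmod{2^{a+1}}$'' corresponds to complete splitting in the compositum $M_a$. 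Chebotarev then yields for $E_a:=\{p\colon v_2(p-1)=a \text{ and } 2 \text{ is a } 2^a\text{-th power mod } p\}$ the formula $\delta(E_a)=1/[L_a:\Q] - 1/[M_a:\Q]$.

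The real obstacle is the degree computation, which is sensitive to the intersection of the Kummer and cyclotomic parts: since $\sqrt{2}=\zeta_8+\zeta_8^{-1}\in \Q(\zeta_{2^k})$ for every $k\ge 3$, the small cases must be treated separately. I would check directly that $[L_1:\Q]=2$, $[M_1:\Q]=4$, and $[L_2:\Q]=[M_2:\Q]=8$ (the equality for $a=2$ reflects the classical fact that $2$ is a fourth power modulo $p$ only when $p\equiv 1\pmod 8$), while for $a\ge 3$ the identity $\Q(2^{1/2^a})\cap \Q(\zeta_{2^{k}})=\Q(\sqrt 2)$ yields $[L_a:\Q]=2^{2a-2}$ and $[M_a:\Q]=2^{2a-1}$. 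Summing then gives $\delta(E_1)=1/4$, $\delta(E_2)=0$, and $\delta(E_a)=1/2^{2a-1}$ for $a\ge 3$, so
$$
\sum_{a\ge 1}\delta(E_a) \;=\; \frac{1}{4}+\sum_{a\ge 3}\frac{1}{2^{2a-1}} \;=\; \frac{1}{4}+\frac{1}{24} \;=\; \frac{7}{24},
$$
and the theorem follows by taking the complement: $1-7/24=17/24$.
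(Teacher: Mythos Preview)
The paper does not prove Theorem~A; it is quoted with a citation to Hasse~\cite{hasse} and used as a black box (the paper's own contribution begins with Lemma~\ref{lem:1}). So there is no proof in the paper to compare against.

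Your argument is correct and is essentially Hasse's original approach: translate ``$\mathrm{ord}_p(2)$ odd'' into a splitting condition in the Kummer--cyclotomic tower $\Q(\zeta_{2^a},2^{1/2^a})$, compute the degrees while taking into account the coincidence $\sqrt{2}\in\Q(\zeta_8)$ (which is what makes the $a=2$ stratum collapse and produces the non-obvious denominator $24$), and sum via Chebotarev. The degree computations $[L_a:\Q]=2^{2a-2}$, $[M_a:\Q]=2^{2a-1}$ for $a\ge 3$ and the special values for $a=1,2$ are all correct.

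One point you should make explicit: after obtaining $\delta(E_a)$ for each $a$, the passage to $\delta\bigl(\bigcup_a E_a\bigr)=\sum_a\delta(E_a)$ is not automatic for natural density of infinite disjoint unions. Here it follows immediately because the tail $\{p:v_2(p-1)>A\}$ has density $2^{-A}\to 0$, so a truncation argument works; but this sentence should appear in the write-up.
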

\vspace{0.1in}
Rephrasing Hasse's result in our terms, more than $70\%$ of the primes $p$ have $M(p)=2$.
For $k\in \mathbb N$, let us denote the set of primes $p$ such that $M(p)=k$  by ${\cal P}_k$. Then we have ${\cal P}_1=\{2\}$.  The set ${\cal P}_2=\{2,5,\dots\}$ is the set of primes in Theorem A. Viewing these primes as a sequence, ${\cal P}_2$ was entered in OEIS as  {\href{https://oeis.org/A091317}{A091317}} (see \cite{OEIS2}). Also, with a different definition, it is identical to {\href{https://oeis.org/A014662}{A014662}} which was introduced by Sloan in 2008 (\cite{OEIS3}). This definition is ``the primes~$p$ for which the multiplicative order of $2$ modulo $p$ is an even number".  It is easy to see why this is indeed a characterization of ${\cal P}_2$.  Moreover, for all these primes we have $M(p^k)=2$, $k\ge 1$, because of the Lifting of the Exponent Lemma (LTE) (\cite{lte}):

\vspace{0.1in}

\begin{lemmaLTE} [   ] Let $x$ and $y$ be arbitrary
integers, $n$ be a positive integer, and $p$ be an odd prime such that $p|(x-y)$, $p\not |x$, and $p\not|y$.  We have then 
$$v_p(x^n-y^n) = v_p(x-y) + v_p(n).$$
\end{lemmaLTE}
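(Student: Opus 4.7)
The plan is to prove LTE in three stages by factoring and induction, isolating the two sources of $p$-adic valuation: the factor $x-y$ and the exponent $n$. I would write $d := v_p(x-y) \ge 1$ and reduce the general case to the prime-power case $n = p^k$.

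First, I would handle the case $\gcd(n,p)=1$. Using the factorization
\[
x^n - y^n = (x-y)\bigl(x^{n-1} + x^{n-2}y + \cdots + xy^{n-2} + y^{n-1}\bigr),
\]
the hypothesis $x \equiv y \pmod{p}$ gives
\[
x^{n-1} + x^{n-2}y + \cdots + y^{n-1} \equiv n y^{n-1} \pmod{p}.
\]
Since $p \nmid n$ and $p \nmid y$, the second factor has $v_p = 0$, and therefore $v_p(x^n - y^n) = v_p(x-y)$, matching the formula because $v_p(n) = 0$.

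Next, I would handle the base case $n = p$. Writing $x = y + p^d m$ with $p \nmid m$ and expanding binomially,
\[
x^p - y^p = \sum_{i=1}^{p} \binom{p}{i} y^{p-i} (p^d m)^i,
\]
the $i=1$ term equals $p^{d+1} m y^{p-1}$, which has $v_p$ exactly $d+1$ (using $p \nmid y$, $p \nmid m$). For $2 \le i \le p-1$ one has $v_p\bigl(\binom{p}{i}\bigr) = 1$, giving $v_p \ge 1 + id \ge d+2$, and for $i = p$ the term has $v_p = dp \ge d+2$ (here $p \ge 3$ is essential, as this is where the argument fails for $p=2$). So the $i=1$ term dominates and $v_p(x^p - y^p) = d + 1 = v_p(x-y) + v_p(p)$.

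Finally, to get the general case, I would proceed by induction on $k := v_p(n)$, writing $n = p^k m$ with $p \nmid m$. Applying the coprime case first with exponent $m$ to $X = x^{p^k}$, $Y = y^{p^k}$ (noting $p \mid X - Y$ by the base-case step below, so the hypothesis persists), one gets $v_p(x^n - y^n) = v_p(x^{p^k} - y^{p^k})$. Then iterating the $n=p$ case $k$ times yields $v_p(x^{p^k} - y^{p^k}) = v_p(x-y) + k$, completing the proof. The only delicate point is verifying that the hypotheses $p\nmid x$, $p\nmid y$, and $p\mid(x-y)$ are preserved when passing from $(x,y)$ to $(x^p, y^p)$, which is immediate since $x^p \equiv x \pmod p$ and $y^p \equiv y \pmod p$; and the one step where oddness of $p$ is truly needed is the estimate on the $i=p$ term of the binomial expansion above.
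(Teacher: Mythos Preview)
Your proof is correct and is in fact the standard argument for LTE. The three-stage decomposition (coprime exponent, exponent $p$, then induction on the $p$-adic valuation of $n$) is the canonical route, and your treatment of the binomial expansion in the $n=p$ case correctly identifies where the hypothesis $p\ge 3$ is used (the term $i=p$, where $dp\ge d+2$ requires $p\ge 3$ when $d=1$).

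However, there is nothing to compare against: the paper does not prove Lemma~LTE at all. It is quoted as a known result with a citation to Cuellar and Samper, and is used only as a black box to justify $M(p^k)=2$ for $p\in\mathcal{P}_2$; the paper even remarks that the same conclusion can be reached directly via the binomial formula, bypassing LTE entirely. So your contribution here is a self-contained proof of a lemma the paper merely cites.
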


\n Here, as usual, $v_p(m)$ is the power of the prime $p$ in the prime decomposition of $m$. Also, we can arrive at $M(p^k)=2$, $k\ge 1$, by using the binomial formula. Indeed, for some integers $s$ and $n$, $2^n+1=ps$ implies 
$$(2^n)^p+1=(ps-1)^p+1\equiv 0 \ \pmod{p^2},$$
showing that $M(p^2)=2$. We repeat this and use induction over $k$. We prove a similar but more general result in the last section (Theorem~\ref{main2}). As a result of the property of primes in ${\cal P}_2$ being characterized by having an even multiplicative order for $2$ modulo $p$, we can say that each prime $q$ in ${\cal P}_3=\{7,23, 47, 71,\dots\} $ ({\href{https://oeis.org/A308732}{A308732}}, see \cite{OEIS4}) has an odd multiplicative order for $2\pmod q$. 

Even more, results that are in ``favor" of a finite upper bound of  ${\Mav}(x)$ are obtained in (\cite{ce}) where it is shown that $M(p)\le 7$ for almost all primes. But since the upper bound we calculated seems more likely to be around $\pi$, the following conjecture of Ska\l{}ba is even more in support of that.

\par \vspace{0.1in}

\begin{conj}  For almost all primes $p$,  $M(p)\le 3$.
\end{conj}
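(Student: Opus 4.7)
The plan is to reformulate $M(p)\le 3$ as an additive equation inside $H_p:=\langle 2\rangle\subseteq\mathbb{F}_p^*$, and then control the exceptional primes by combining a pigeonhole argument in the large-order range, a Gauss-sum estimate in a middle range, and an elementary sieve in the small-order range.

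Dividing out a suitable power of $2$ shows that, for odd $p$, the condition $M(p)\le 3$ is equivalent to the solvability of $2^a+2^b\equiv -1\pmod p$ for some $a,b\ge 0$; equivalently, the shifted set $T_p:=\{-1-2^a\bmod p:0\le a<d_p\}$ meets $H_p$, where $d_p:=\mathrm{ord}_p(2)$. When $M(p)\ge 3$ one has $-1\notin H_p$, so $T_p\subseteq\mathbb{F}_p^*$ and $|T_p|=|H_p|=d_p$; pigeonhole forces $T_p\cap H_p\neq\emptyset$ whenever $2d_p>p-1$. Hence the conjecture reduces to $|\mathcal E(x)|=o(\pi(x))$, where
\[
\mathcal E(x):=\{p\le x:d_p\le (p-1)/2,\ M(p)\ge 4\}.
\]

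Writing $k:=(p-1)/d_p\ge 2$, the group $H_p$ consists of the $k$-th power residues modulo $p$, and the standard additive-character expansion yields
\[
N(p):=\#\{(a,b):2^a+2^b\equiv -1\pmod p\}=\frac{d_p^2}{p}+\frac{1}{p}\sum_{t=1}^{p-1}\exp(2\pi it/p)\,S_p(t)^2,
\]
with $S_p(t):=\sum_{a=0}^{d_p-1}\exp(2\pi it\cdot 2^a/p)$. Expanding $S_p(t)$ through multiplicative characters of order $k$ and invoking the Weil bound $|g(\chi^j)|=\sqrt p$, a direct computation shows that the error term is $O(\sqrt p)$ uniformly in $k$, while the main term $d_p^2/p\asymp p/k^2$ dominates precisely when $k\le p^{1/4}$; this gives $M(p)\le 3$ for every prime with $d_p\ge p^{3/4}$. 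In the opposite direction, primes with $d_p\le x^{1/2-\varepsilon}$ are negligible because
\[
\#\{p\le x:d_p\le T\}\le\sum_{m\le T}\omega(2^m-1)\ll T^2,
\]
which contributes only $O(x^{1-2\varepsilon})=o(\pi(x))$ primes.

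The hardest step is the intermediate range $p^{1/2}\le d_p\le p^{3/4}$: the Gauss-sum error matches the main term, and the best current power-saving bounds for exponential sums over multiplicative subgroups (Bourgain--Glibichuk--Konyagin) give only $|S_p(t)|\ll d_p^{1-\eta}$ with $\eta$ too small to force $N(p)\ge 1$ for each individual $p$. I would attempt to bypass this via a variance argument: estimate $\sum_{p\in I}N(p)^2$ over dyadic intervals $I\subseteq[1,x]$ using Weil bounds on the four-fold exponential sums that arise, and combine with a lower bound on $\sum_{p\in I}N(p)$ through Chebyshev's inequality to conclude $\#\{p\in I:N(p)=0\}=o(|I|/\log x)$. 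A fully unconditional completion likely requires either sharper exponential-sum estimates for subgroups of $\mathbb{F}_p^*$ or a density version of Artin's conjecture for base $2$: under GRH, Hooley's theorem gives $d_p=p-1$ on a set of density equal to Artin's constant, and any quantitative improvement of this toward a density-one statement would allow the pigeonhole step alone to settle the conjecture.
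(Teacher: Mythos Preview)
This statement is listed in the paper as an open \emph{conjecture} (attributed to Ska\l{}ba), not a theorem; the paper offers no proof and instead cites Elsholtz's weaker result that $M(p)\le 7$ for almost all primes. So there is no ``paper's proof'' to compare against, and any purported proof would in fact settle an open problem.

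Your write-up is not a proof either, and to your credit you say so. The reduction to $2^a+2^b\equiv -1\pmod p$ is correct, and your treatment of the two easy ranges is sound: the pigeonhole step covers $d_p=p-1$ (note that $d_p\mid p-1$, so $2d_p>p-1$ forces $d_p=p-1$, not merely $k\le 2$ as your phrasing suggests); the Jacobi/Gauss-sum count $N(p)=p/k^2+O(\sqrt{p})$ is right and handles all $p$ with $d_p\gg p^{3/4}$; and the Erd\H{o}s--Murty bound $\#\{p\le x:d_p\le T\}\ll T^2$ disposes of $d_p\le x^{1/2-\varepsilon}$. The genuine obstruction is exactly the window $p^{1/2}\lesssim d_p\lesssim p^{3/4}$ that you flag: here the error term in $N(p)$ swamps the main term, and the Bourgain--Glibichuk--Konyagin savings on $|S_p(t)|$ are known to be too weak to force $N(p)\ge 1$ prime-by-prime.

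Your proposed variance argument is only a hope, not a method: you would need a \emph{lower} bound on $\sum_{p\in I}N(p)$ that is not itself killed by the same Gauss-sum errors, and the four-fold sums arising in $\sum_p N(p)^2$ do not obviously admit enough cancellation after averaging over $p$. Absent a genuinely new input---either a density-one Artin-type statement for base $2$ or a substantial strengthening of subgroup exponential-sum bounds---this range is exactly why the conjecture remains open.
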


\par \vspace{0.1in}

\n One more fact that helps in controlling the growth of  ${\Mav}(x)$ is  Lemma~\ref{lem:2} in the next section. Since upper bounds for $M(n)$ are important,  we have the first estimate that will be used later in Section~3 and proved in Section~2:

\begin{equation}\label{estone} M(n)=O(\log n).
\end{equation}

\n  However, this inequality is sharp infinitely often as we saw from the example $$M(2^k-1)=k> \frac{\log(2^k-1)}{\log 2}, \ k\in \mathbb N.$$ On the other hand, we will show that for  ${\Mav}(x)$, the following is true. 

\begin{theorem}\label{main}
We have for some constants $\mathtt C$ and $\mathtt x_0$ 
\begin{equation}\label{mainresult}
{\Mav}(x)=\frac{1}{x}\underset{n\le x}{\sum} M(n)\le  \mathtt C (\log\log x)^3\ \  {\text{\rm as}}\ \  x\ge \mathtt x_0.
\end{equation}
\end{theorem}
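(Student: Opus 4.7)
The plan is as follows. Since $M(2n)=M(n)$, it suffices to control $\sum_{n\le x,\ n\text{ odd}} M(n)$. I would partition $\{1,\ldots,x\}$ into a ``structured'' subset on which $M(n)$ can be bounded by a power of $\log\log x$, and an ``exceptional'' subset whose contribution is absorbed via the crude bound (\ref{estone}), $M(n)=O(\log n)$.

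The main multiplicative tool is sub-multiplicativity, $M(mn)\le M(m)M(n)$: if $A$ is a multiple of $m$ of Hamming weight $M(m)$ and $B$ a multiple of $n$ of weight $M(n)$, then after shifting $B$ by a sufficiently large power of $2$ to avoid overlap, the product $AB$ is a multiple of $mn$ whose weight is exactly $M(m)M(n)$. Combined with Hasse's Theorem (density $17/24$ of primes lie in ${\cal P}_2$) and the LTE calculation noted in the introduction (which gives $M(p^e)=2$ for every $p\in {\cal P}_2$ and $e\ge 1$), this yields $M(n)\le 2^{\omega(n)}$ whenever all prime divisors of $n$ lie in ${\cal P}_2$.

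Next, with a threshold $y=y(x)$ (for instance $y=\exp((\log\log x)^A)$), I would invoke a sieve (Brun or Selberg, as signalled by the keywords) to show that all but $O(x/\log x)$ integers $n\le x$ have all prime factors in ${\cal P}_2$ and bounded from above by $y$, and satisfy $\omega(n)\le 3\log\log x$. On this ``main'' set, a refinement of sub-multiplicativity via a CRT construction --- seeking a single exponent $L$ with $2^L\equiv -1\pmod{p^{v_p(n)}}$ for every $p\mid n$, and paying only a bounded cost whenever compatibility of the implicit congruences on $L$ fails (with the failure frequency controlled by Wieferich-type estimates on irregular multiplicative orders of $2$) --- produces $M(n)\le C(\log\log x)^3$ on this set. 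Summing, the exceptional set contributes $O((x/\log x)\cdot \log x)=O(x)$ via (\ref{estone}), while the main set contributes $O(x(\log\log x)^3)$, giving (\ref{mainresult}).

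The principal obstacle is the CRT step: the naive sub-multiplicative bound $M(n)\le 2^{\omega(n)}$ has average of order $\log x$ by Selberg--Sath\'e, which is far too weak. Replacing the exponential $2^{\omega(n)}$ by a polynomial in $\omega(n)\sim\log\log x$ requires exploiting the fine arithmetic of multiplicative orders of $2$ modulo prime powers and quantifying how often those orders fail to align, which is exactly where the Wieferich-type input enters. Executing this delicate compatibility argument, uniformly over the typical prime factorisation of $n$, is the technical heart of the proof.
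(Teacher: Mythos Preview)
Your proposal has a genuine gap, and in fact the decomposition is essentially backwards relative to what can actually be proved.

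The claim that all but $O(x/\log x)$ integers $n\le x$ have \emph{all} prime factors in ${\cal P}_2$ is false: since ${\cal P}\setminus{\cal P}_2$ has positive density $7/24$, we have $\sum_{p\notin{\cal P}_2,\,p\le x}\frac1p\to\infty$, so in fact \emph{almost all} integers have a prime factor outside ${\cal P}_2$. Similarly, for $y=\exp((\log\log x)^A)$ the $y$-smooth integers up to $x$ are $o(x)$ (indeed $x\rho(u)$ with $u=\log x/\log y\to\infty$), so almost no integers have all prime factors $\le y$. Thus your ``main'' set is actually negligible and your ``exceptional'' set is essentially everything; the argument collapses there before the CRT step is even reached.

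The paper handles the decomposition in the opposite direction. Writing $n=ab$ where $a$ is the $y$-smooth part (with $y=(\log x)^{K\log_2 x}$) and $b$ is the $y$-rough, squarefree part, the \emph{large} primes in $b$ are dealt with not by Hasse/${\cal P}_2$ but by a deep additive-combinatorics input: after discarding primes $p$ with $\ell_2(p)\le p^{49/100}$ (few, by Erd\H os--Murty) and controlling the pairwise gcd's of the orders $\ell_2(p)$ for $p\mid b$, Hart's theorem on sumsets of multiplicative subgroups of $(\Z/p\Z)^*$ gives a multiple of each $p$ of Hamming weight $\le 7$, and a CRT patching as in Lemma~\ref{lem:2} yields $M(b)\le 7$. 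The \emph{small} part $a$ is handled crudely by $M(a)\ll\log a$, and smooth-number estimates force $a\le \exp\bigl((\log_2 x)^3\bigr)$ off a thin set, giving $M(n)\le M(a)M(b)\ll(\log_2 x)^3$. So the $(\log\log x)^3$ comes from the smooth part, not from $\omega(n)$, and the key missing ingredient in your plan is a way to handle primes outside ${\cal P}_2$ --- which is precisely what Hart's sumset theorem supplies.
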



\section{Preliminary results and notations}\label{sec2}

In what follows, the symbol $\ll$,  is, at times, replacing the usual notation $f=O(g)$ meaning that $|f(n)|\le Cg(n)$ if $n$ is big enough. A similar statement goes for the symbol $\gg$. If $a$ and $b$ are integers, we say that $b$ divides $a$ if $a=bc$ for some integer $c$. We write this relation as $b|a$ and we call $b$ a {\it divisor} of $a$. A number $p\in \mathbb N\setminus\{1\}$ is said to be a {\it prime} if 
the only divisors of $p$ are $1$ and itself. The set of prime numbers is denoted by $\cal P$. As usual for $n\in \mathbb N$, define $\mu(n)$,  the Möbius function, by 

 $$\mu(n)=
\begin{cases}1\  \text{if n is a square-free positive integer with an even number of prime factors,}\   \\ 
 -1\  \text{ if n is a square-free positive integer with an odd number of prime factors,}         \\ 
 0\  \text{ if n has a squared prime factor.}
\end{cases}
$$
We remind the reader that $\omega(n)$ is the number of distinct prime divisors of $n$.  Also, $\phi(n)$ is the Totient function, the number of natural numbers less than $n$ which are relatively prime with $n$. For a set $A$, we use  $\# A$ as the standard notation for the cardinality of $A$. The Dickman–de Bruijn function $\displaystyle \rho (u)$ is a continuous function that satisfies the delay differential equation

$$\displaystyle u\rho '(u)+\rho (u-1)=0\,$$

\n with initial conditions $\displaystyle \rho (u)=1$ for $0\le u\le 1$. We will be using these ingredients in the proof of the main theorem.

For an odd number $a$, we denote by $\ell_2(a)$ the multiplicative order of $2$ modulo $a$, i.e., the smallest natural number $k$ such that $2^k\equiv 1$ (mod a). Dedekind's Psi function is defined by 
$$\Psi(n)=n\underset{p|n}{\prod}\Big (1+\frac{1}{p}\Big )$$

\n where the index set of the above product goes over all prime divisors of $n$. 
\par\vspace{0.1in} 
We start with the simple argumentation of (\ref{estone}). We write $n$ in base $2$ as $n=\sum_{j=0}^{s-1}2^{u_j}$ where $s=s_2(n)$. All the exponents are distinct, so we may assume that $u_0\ge 0$, $u_1>u_0$, which attracts $u_1\ge 1$. Inductively, we find that $u_j\ge j$ for all $j=0,1,\ldots,s-1$. Hence, we infer that 
$$n\ge \sum_{j=0}^{s-1}2^{j}=2^{s}-1>2^{s-1}.$$
Therefore, we have $M(n)\le s_2(n)=s<1+\frac{\log n}{\log 2}$ which implies  (\ref{estone}).

\par\par\vspace{0.1in} 

We continue with two important lemmas.

\begin{lemma}
\label{lem:1}
We have 
$$
M(ab)\le M(a)M(b)
$$
for all positive integers $a$ and $b$. 
\end{lemma}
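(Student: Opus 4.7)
The plan is to use the well-known subadditivity of the binary digit sum, $s_2(x+y)\le s_2(x)+s_2(y)$, combined with the invariance $s_2(2^u y)=s_2(y)$. These two facts together immediately give the multiplicative bound $s_2(xy)\le s_2(x)s_2(y)$ by writing $x$ as a sum of $s_2(x)$ powers of $2$ and distributing.

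Concretely, I would let $s=M(a)$ and $t=M(b)$ and pick witnesses: integers $k,m\ge 1$ with $s_2(ka)=s$ and $s_2(mb)=t$. Write
$$ka=\sum_{i=1}^{s} 2^{u_i}$$
with distinct exponents $u_i\ge 0$. Then the product
$$(ka)(mb)=\sum_{i=1}^{s} 2^{u_i}\cdot (mb)$$
is a multiple of $ab$ (in fact it equals $km\cdot ab$), and each summand $2^{u_i}(mb)$ has binary digit sum exactly $t$. Applying subadditivity of $s_2$ to the sum of these $s$ terms gives
$$s_2\bigl((ka)(mb)\bigr)\le \sum_{i=1}^{s} s_2\bigl(2^{u_i}(mb)\bigr)=st.$$
Since $(ka)(mb)$ is a positive multiple of $ab$, by definition $M(ab)\le s_2((ka)(mb))\le st=M(a)M(b)$, which is the desired inequality.

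No real obstacle is expected; the only small thing to verify cleanly is the subadditivity $s_2(x+y)\le s_2(x)+s_2(y)$, which follows by induction on the number of carries in school addition (each carry replaces two $1$'s and a $0$ by a $0$, a $0$, and a $1$, never increasing the digit count). Everything else is just distributing and counting.
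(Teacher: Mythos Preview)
Your proof is correct and follows essentially the same approach as the paper: multiply the two witness multiples together and observe that the Hamming weight of the product is at most the product of the Hamming weights, which the paper phrases via ``combining equal powers of $2$ only decreases the count'' and you phrase via subadditivity of $s_2$. The two formulations are equivalent.
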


\begin{proof}
Indeed, if $M(a)=i$, $M(b)=j$,
$$
a\mid \sum_{u=1}^i 2^{\alpha_u},\ \ \text{and}\ \ b\mid \sum_{v=1}^j 2^{\beta_v}
$$
with mutually distinct tuples $[\alpha_1,\ldots,\alpha_i]$ and $[\beta_1,\ldots,\beta_j]$, then 
$$
ab\mid \sum_{\substack{1\le u\le i\\ 1\le v\le j}} 2^{\alpha_i+\beta_j}:=K.
$$
The last sum above has $ij$ powers of 2 in it, which may not all be distinct. However, combining two terms with the same exponents, like $2^s+2^s=2^{s+1}$, reduces the number of terms in the sum by one, and eventually one arrives at a sum of powers of $2$ with distinct exponents. Then the digit sum of $K$ is at most $i j=M(a)M(b)$. By the definition of $M(\cdot)$, we then have $M(ab)\le s_2(K)\le M(a)M(b)$ and the lemma is proved. \end{proof}

\vspace{0.1 in}

\begin{lemma}
\label{lem:2}
If $a$ and $b$ are odd coprime and $\gcd(\ell_2(a),\ell_2(b))=1$, then 

\begin{equation}\label{new}
M(ab)= \max\{M(a),M(b)\}.
\end{equation}
\end{lemma}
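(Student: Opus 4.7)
My plan is to establish the two inequalities separately. The lower bound $M(ab)\ge \max\{M(a),M(b)\}$ is immediate: every multiple of $ab$ is a multiple of both $a$ and $b$, so its Hamming weight is at least $M(a)$ and at least $M(b)$.

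For the upper bound, assume without loss of generality that $M(a)\le M(b)=m$, and fix minimal-weight witnesses $\sum_{u=1}^{M(a)} 2^{\alpha_u}\equiv 0\pmod a$ and $\sum_{v=1}^{m} 2^{\beta_v}\equiv 0\pmod b$. The goal is to build a single multiple of $ab$ with exactly $m$ ones in its binary expansion. First, I would pad the $a$-side up to $m$ terms by repeatedly applying the identity $2^{r}\equiv 2^{r-1}+2^{r-1}\pmod a$, which is valid because $a$ is odd and so $2$ is invertible modulo $a$. Each application replaces one residue $r$ by two copies of $r-1\pmod{\ell_2(a)}$, increasing the term-count by one without altering the sum modulo $a$. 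Iterating $m-M(a)$ times produces residues $r_1,\dots,r_m\in \mathbb{Z}/\ell_2(a)\mathbb{Z}$ (with possible repetitions) satisfying $\sum_{v=1}^m 2^{r_v}\equiv 0\pmod a$. On the $b$-side, simply take $s_v\equiv\beta_v\pmod{\ell_2(b)}$, so that $\sum_{v=1}^m 2^{s_v}\equiv 0\pmod b$.

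The hypothesis $\gcd(\ell_2(a),\ell_2(b))=1$ now makes the Chinese Remainder Theorem applicable on $\mathbb{Z}/L\mathbb{Z}$ with $L=\ell_2(a)\ell_2(b)$: for each $v$ there is a unique $\gamma_v^{(0)}\pmod L$ with $\gamma_v^{(0)}\equiv r_v\pmod{\ell_2(a)}$ and $\gamma_v^{(0)}\equiv s_v\pmod{\ell_2(b)}$. Shifting each $\gamma_v^{(0)}$ by a distinct, sufficiently large multiple of $L$ yields distinct non-negative exponents $\gamma_1<\cdots<\gamma_m$, and since $\ell_2(a)\mid L$ and $\ell_2(b)\mid L$ these shifts preserve the residues modulo $\ell_2(a)$ and $\ell_2(b)$. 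Therefore $N=\sum_{v=1}^m 2^{\gamma_v}$ is divisible by $a$, by $b$, and hence by $ab$, while having Hamming weight exactly $m$, which gives $M(ab)\le m$ and finishes the argument.

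The delicate step is the padding: the $a$-side witness may have strictly fewer than $m$ terms, and one must enlarge it to length $m$ without losing divisibility. The doubling identity does exactly this because we may work modulo $\ell_2(a)$, where repeated residues are harmless, and the subsequent shifts by multiples of $L$ translate the residue-level construction into distinct integer exponents, guaranteeing that the Hamming weight of $N$ equals the term-count $m$ rather than being reduced by collisions.
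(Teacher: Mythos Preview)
Your proof is correct and follows essentially the same route as the paper's: both establish the easy lower bound, pad the shorter witness up to length $m=\max\{M(a),M(b)\}$ via the identity $2^{\alpha}=2^{\alpha-1}+2^{\alpha-1}$, and then invoke the Chinese Remainder Theorem on exponents modulo $\ell_2(a)$ and $\ell_2(b)$ to merge the two witnesses into a single multiple of $ab$ with Hamming weight $m$. The only cosmetic difference is that the paper keeps the integer exponents distinct at every padding step (by adding large multiples of $\ell_2(a)$ immediately), whereas you work with residue classes and defer the ``make distinct'' adjustment to a single final shift by multiples of $L=\ell_2(a)\ell_2(b)$; both handle the issue equally well.
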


\begin{proof}
Let again $i=M(a)$ and $j=M(b)$. Let 
$$
 a\mid \sum_{u=1}^i 2^{\alpha_u}\ \ \text{and}\ \ b\mid \sum_{v=1}^j 2^{\beta_v},
$$
and without loss of generality, we may assume that $i<j$.  We can always replace one of the powers of $2$, say $2^{\alpha_i}$, by a sum of two distinct powers of $2$ say $2^{\alpha_i-1+x\ell_2(a)}+2^{\alpha_i-1+y\ell_2(a)}$,  where $x$ and $y$ are sufficiently large positive integers. We can also ensure that $$\{\alpha_1,\ldots,\alpha_{i-1},\alpha_i-1+x\ell_2(a),\alpha_i-1+y\ell_2(a)\}$$ are $i+1$ distinct integers and the divisibility 
$$
a\mid \bigg(\sum_{u=1}^{i-1} 2^{\alpha_u}\bigg)+2^{\alpha_i-1+x\ell_2(a)}+2^{\alpha_i-1+y\ell_2(a)}
$$
still holds, but now the number on the right has digit sum $i+1$. Proceeding in this way, if $i<j$ for say $j-i$ steps, we may assume that we have a situation like
$$
a\mid \sum_{u=1}^j 2^{\alpha_u}\ \text{and} \  b \mid \sum_{u=1}^j 2^{\beta_u},
$$
where the two multiples of $a$, respectively $b$, have Hamming weight equal to $j=\max\{M(a),M(b)\}$. It now suffices to 
choose exponents $\{x_u\}_{u=1,2,\cdots,j}$, by the Chinese Remainder Lemma (which can be applied under the given hypothesis on $\ell_2(a)$ and $\ell_2(b)$), satisfying $$x_u\equiv \alpha_u\pmod {\ell_2(a)}\ \text{and}\ x_u\equiv \beta_u\pmod {\ell_2(b)}, u=1,2,\cdots,j .$$  Furthermore, we may pick them so that they are all distinct positive integers. For this choice, we have that both $a$ and $b$ divide $N:=\sum_{u=1}^j 2^{x_u}$,
a number of Hamming weight $j$.  \mbox{Since $a$ and $b$}  are assumed to be coprime, we conclude that $ab\mid N$, which shows that $$M(ab)\le s_2(N)= \max\{M(a),M(b)\}.$$

\n Since the other inequality is true by the definition of $M$, we conclude that the equality (\ref{new}) is true. \end{proof}
For more notation, given two positive integers $k,w\ge 2$, we write $\log_k w=\max\{1,\log (\log_{k-1} w)\}$ with $\log_1 w$ being the natural logarithm.
We will be using this notation later and as a warning, it should not be confused with logarithms in base $k$ or $k-1$. 
\vspace{0.2in}
\section{The proof of the Theorem 1}

This proof is quite long; we are dividing it into pieces for clarity. We split the estimates over a few classes of integers that appear in the sum in (\ref{mainresult}). Namely, we partition the set of $n\le x$ into four subsets. The first three lead to estimates that are much weaker than what we claim in (\ref{mainresult}). The last one of them leads to estimates that match the estimate in (\ref{mainresult}).

We let $x$ be large and consider 
$y:=(\log x)^{K\log_2 x}$, for some large $K$ a large constant to be determined later.  We are assuming that $y<x$ which is equivalent to $K(\log \log x)^2<\log x$ and, since $\lim_{x\to \infty} \frac{\log x}{ (\log \log x)^2}=\infty$, we have indeed $y<x$ if $x$ is big enough no matter what the fixed constant $K$ is.

Next, we let  
${\mathcal Q}:=\{p\in {\mathcal P}: \ell_2(p)\le p^{49/100}\}$ and for $t>1$ we consider  ${\mathcal Q}(t)={\mathcal Q}\cap [1,t)$. Two of the sets we mentioned earlier are  
$$
{\mathcal A}_1(x):=\{n\le x: p\mid n~{\text{\rm for~some~prime}}~p\in {\mathcal Q},~p>y\} \ \text{and}
$$

$${\mathcal A}_2(x):=\{ n\le x: x\not \in {\mathcal A}_1(x), p^2\mid n \text{ for some prime}\  p>y\} .$$

\n If  $n$ is not in ${\mathcal A}_1(x)\cup {\mathcal A}_2(x)$, we can write it as 
 $n=ab$, 
 where $P(a)\le y$ and every prime factor $p$ of $b$ satisfies $p>y$. Here, $P(m)$ is the largest prime factor of the positive integer $m$ with the convention that $P(1)=1$. Since $n\not \in {\mathcal A}_2(x)$, it follows that $b$ is square-free. Also, $p\mid b$ implies that 
 $\ell_2(p)>p^{49/50}$, since $n\not\in {\mathcal A}_1(x)$. We will show that 
 \begin{itemize}
 \item[(i)] $M(b)\le 7$ except possibly for $n$ in a set ${\mathcal A}_3(x)$;
 \item[(ii)] $a\le \log x^{\varepsilon (\log_2 x)^2}$ for any fixed $\varepsilon>0$ except for $n$ in a set ${\mathcal A}_4(x)$,  for $x>x_{\varepsilon}$,
\end{itemize}
\n where both sets ${\mathcal A}_3(x)$ and ${\mathcal A}_4(x)$ are of cardinality $O(x/(\log x)^2)$.

\vspace{0.1in}
\n {\bf Case I:} For the numbers in  ${\mathcal A}_1(x)$, by  a classical argument of Erd\H os and Murty \cite{EM}, we have 
$$
2^{\#{\mathcal Q}(t)}\le \prod_{q\in {\mathcal Q}(t)} q\mid \prod_{k\le t^{49/100}} (2^k-1)\le 2^{\sum_{k\le t^{49/100}}k}=\exp(O(t^{49/50})),
$$
showing that $\#{\mathcal Q}(t)\ll t^{49/50}.$ Each $n\in {\mathcal A}_1(x)$ has a prime factor $q>y$ in ${\mathcal Q}$. Thus, the number of elments  $n\in {\mathcal A}_1(x)$ is at most
 $$
 \#{\mathcal A}_1(x)  \le  \sum_{\substack{q\in {\mathcal Q}\\ q>y}} \frac{x}{q}\ll x\int_{y}^x \frac{d \#{\mathcal Q}(t)}{t}
  \ll  x\int_y^x \frac{\#{\mathcal Q}(t)}{t^2} dt\ll x\int_y^x \frac{dt}{t^{51/50}}
  \ll   \frac{x}{y^{1/50}}\ll \frac{x}{(\log x)^2}.
 $$
 Using  (\ref{estone}), this shows that 
 $$
 \sum_{n\in {\mathcal A}_1(x)} M(n)\ll \sum_{n\in {\mathcal A}_1(x)} \log n\ll (\log x)\#{\mathcal A}_1(x)=O\left(\frac{x}{\log x}\right),
 $$
 and this is negligible for us. 

\vspace{0.1in}
\n {\bf Case II:}  For each  $n\in {\mathcal A}_2(x)$ there is a prime $p>y$ such that $p^2\mid n$, and the number of such $n$ when $p$ is fixed is $\le x/p^2$. Hence,
 $$
 \#{\mathcal A}_2(x)\ll \sum_{p>y}\frac{x}{p^2}\ll x \int_y^{\sqrt{x}} \frac{dt}{t^2}\ll \frac{x}{y}\ll \frac{x}{(\log x)^2},
 $$
 which shows, via (\ref{estone}), that $M(n)\ll \log n\ll \log x$. Hence, we infer that
 $$
 \sum_{n\in {\mathcal A}_2(x)} M(n)\ll (\log x)\#{\mathcal A}_2(x)\ll \frac{x}{\log x}.
 $$

\n {\bf Case III:} Let us consider the numbers outside of  ${\mathcal A}_1(x)$ and ${\mathcal A}_2(x)$ but in ${\mathcal A}_3(x)$ and also in  ${\mathcal A}_4(x)$. 
  Then, using the same idea as before,  we obtain
 $$
 \sum_{n\in {\mathcal A}_i(x)} M(n)\ll (\log x)\#{\mathcal A}_i(x)=O\left(\frac{x}{\log x}\right)\qquad {\text{\rm for}}\quad i=3,4,
 $$
 and these are negligible. 

\n {\bf Case IV:} On the rest of $n\le x$, we have that 
 $$
 M(n)\le M(a) M(b)\ll \log a\ll \varepsilon (\log_2 x)^3,
 $$
 where in the above we used Lemma \ref{lem:1}, which gives us what we wanted in light of the fact that $\varepsilon>0$ can be taken arbitrarily small. 

\par\vspace{0.1in}

It remains to prove that (i) and (ii) hold. 
\par
\n {\bf Case (i):} For (i), we let $n=ab$, where $b$ is square-free, write 
 $
 b=\prod_{p\mid b} p,
 $
 and consider  $d_p$ defined  by 

\begin{equation}\label{dpdef}
d_p(n):=\gcd\Big(\phi(p),\phi(\frac{n}{p})\Big)=\gcd\Big(p-1,\phi(\frac{n}{p})\Big)=\gcd\Big(p-1,\prod_{\substack{r\mid b\\ r\ne p}} (r-1)\Big).
\end{equation}

We aim to put a bound on $d_p$ valid for all $n\le x$ not in ${\mathcal A}_1(x)\cup {\mathcal A}_2(x)$ and which is small for most such $n$. Well, put $z:=(\log x)^5$ 
 and let 
 \begin{itemize}
 \item[(1)] ${\mathcal A}_{3,1}(x)$ is the set of all $n\le x$ such that $q\mid d_p$ for some prime $q>z$;
 \item[(2)] ${\mathcal A}_{3,2}(x)$ is the set of all $n\le x$ such that $q^{\beta} \mid d_p$ for some $\beta >1$ and $q^\beta >z$;
 \item[(3)] ${\mathcal A}_{3,3}(x)$ is the set of all $n\le x$ divisible by at least $N:=\lfloor 10\log_2 x\rfloor$ primes. 
 \end{itemize}
 For ${\mathcal A}_{3,1}(x)$, the numbers in this set have two prime factors $p$ and $r$ such that $q\mid (p-1,r-1)$ for some $q>z$. For fixed $p,r$, the number of such $n\le x$ is $\le x/pr$. Summing up over $p$ and $r$ in the progression $1\pmod q$, we get 
 $$
 x\sum_{\substack{p,q\equiv 1\pmod q\\ p,r\le x}} \frac{1}{pr}\ll x\bigg(\sum_{\substack{p\le x\\ p\equiv 1 \pmod q}} \frac{1}{p}\bigg)^2\ll \frac{x(\log_2 x)^2}{q^2}.
 $$

 \n Summing up the above estimate over all primes $q>z$, we get 
 $$
 \#{\mathcal A}_{3,1}(x)  \ll  \sum_{q>z} \frac{x(\log_2 x)^2}{q^2}\ll x(\log_2 x)^2\sum_{q>z} \frac{1}{q^2}\ll \frac{x(\log_2 x)^2}{z}
  = O\left(\frac{x}{(\log x)^2}\right)
 $$
 and this is acceptable for us. 

\par\vspace{0.1in}
 When $n\in {\mathcal A}_{3,2}(x)$, then $p\mid n$ for some prime $p$ such that $p\equiv 1\pmod {q^\beta}$ with $\beta>1,~q^\beta>z$. Fixing $q^\beta$, the number of such $n\le x$ is at most
 $$
 \sum_{\substack{p\le x\\ p\equiv 1\pmod {q^a}}} \frac{x}{p}\ll \frac{x\log_2 x}{\phi(q^\beta)}\ll \frac{x\log_2 x}{q^\beta }.
 $$
 Summing this up over all $q^\beta$ with $\beta>1$ (in particular, squarefull numbers) and $q^\beta>z$, we get that
 $$
 \#{\mathcal A}_{3,2}(x)  \ll  \sum_{\substack{q^\beta>z\\ \beta>1}} \frac{x(\log_2 x)}{q^\beta}\ll x(\log_2 x) \sum_{\substack{s>z\\ s~{\text{\rm squarefull}}}} \frac{1}{s}
  \ll  \frac{x\log_2 x}{{\sqrt{z}}}=O\left(\frac{x}{(\log x)^2}\right).$$

\par\vspace{0.1in}

 For ${\mathcal A}_{3,3}(x)$, recall that $N=\lfloor 10\log_2 x\rfloor$ and let $n\le x$ be a positive integer divisible by a prime $p\equiv 1\pmod m$, where $m$ is square-free and has at least $N$ prime factors. For fixed $p$, the number of such $n\le x$ is $\le x/p$. Summing this up over all $p\equiv 1\pmod m$, $p\le x$, we get
 $$
 x\sum_{\substack{p\le x\\ p\equiv 1\pmod m}} \frac{1}{p}\ll \frac{x\log_2 x}{\phi(m)}.
 $$
 Summing this up over all square-free $m$'s having at least $N$ prime factors we get

$$
 \sum_{\substack{m\le x\\ \mu^2(m)=1\\ \omega(m)\ge N}} \frac{x\log_2 x}{\phi(m)}  \ll   x(\log_2 x)\sum_{k\ge N} \frac{1}{k!}\left(\sum_{p\le x} \frac{1}{p-1}\right)^k. $$
As a result, we have 
\begin{equation} \label{firstest} 
  x\sum_{\substack{p\le x\\ p\equiv 1\pmod m}} \frac{1}{p}  \ll  x\log_2 x \sum_{k\ge N} \frac{1}{k!}\left(\log_2 x+c_0\right)^k,
 \end{equation}

 \n where $c_0$ is some absolute constant. 
 Since $k!\ge (k/e)^k$, we arrive at a bound of 
 $$
 x\log_2 x\sum_{k\ge N }\left(\frac{e\log_2 x+ec_0}{k}\right)^k
 $$
 \n on the  sum in (\ref{firstest}). 
 For $k>N$, the above sum is dominated by its first term. Thus, 
 $$
 \#{\mathcal A}_{3,3}(x)\ll x\log_2 x \left(\frac{e\log_2 x+ec_0}{N}\right)^N\ll \frac{x\log_2 x}{(\log x)^{10\log(N/e)}}=O\left(\frac{x}{(\log x)^{10}}\right),
 $$
 which is acceptable for us. Put ${\mathcal A}_3(x):=\cup_{i=1}^3 {\mathcal A}_{3,i}(x)$ and let $n\not \in {\mathcal A}_j(x)$ for $j=1,2,3$. Then writing for $p\mid b$,
 
\begin{equation}\label{fpdef}
 f_p=\prod_{q^\beta\| b} q^\beta,
 \end{equation}

 \n we get that $q^\beta<z$ (for both $\beta=1$ and $\beta>1$) and there are at most $N$ prime power factors \mbox{$q^\beta$ in $f_p$.} Thus, 
 $$
 f_p<z^N=(\log x)^{50\log_2 x}<p^{1/100},
 $$


 \n provided we take $K:=\num{5000}$. Since $\ell_2(p)\ge p^{49/100}$, we get that 
 $$
 \frac{\ell_2(p)}{\gcd(\ell_2(p),f_p)}\ge p^{49/100-1/100}=p^{48/100}=p^{12/25}>p^{11/23}\log p
 $$

\n for large $p$. Let $g_p:=2^{f_p}$. It then follows that the element $g_p$ has order at least \mbox{$p^{11/23}\log p$ modulo $p$,} so by a result of Hart \cite{Hart}, we have that 
 \begin{equation}
 \label{eq:Hart}
 p\mid \sum_{i=1}^7 g_p^{\alpha_i}
 \end{equation}
 
\n for some positive integers $\alpha_1,\ldots,\alpha_7$. Indeed, Theorem 1.13 in \cite{Hart} says that if $A\subset ({\mathbb Z}/p{\mathbb Z})^*$ is a multiplicative subgroup of $({\mathbb Z}/p{\mathbb Z})^*$ of order 
$\gg p^{11/123}(\log |A|)^{12/23}$, then 
$$
({\mathbb Z}/p{\mathbb Z})^*\subseteq 6A,
$$
where the right--hand side above means all elements of the form  $\sum_{i=1}^6 a_i\pmod p$, where $a_i\in A$. Applying this to the subgroup $A$ of $({\mathbb Z}/p{\mathbb Z})^*$ generated by $g_p$, we get that $-1\in 6A\pmod p$, so $0\in 1+6A\pmod p$ which implies \eqref{eq:Hart}.
This holds for all $p\mid b$. Further, 
 $$
 \gcd(\ell_2(p)/d_p,\ell_2(q)/d_q)=1\qquad {\text{\rm for~any~two~prime~factors}}~p,q~{\text{\rm of}}~~b.
 $$
 By the argument of Lemma \ref{lem:2} based on the Chinese Remainder Theorem, we get that $M(b)\le 7$. 

\vspace{0.1in}

{\bf Case (ii)}  For this, we use estimates from the theory of smooth numbers. Namely, let $w:=(\log x)^{\varepsilon (\log_2 x)^2}$. Assume that 
 $a>w$. Then $n\le x$ is a positive integer divisible by $a>w$ such that $P(a)\le y$. The number of such $n\le x$ is $n/a$ and summing over all $y$-smooth $a$, we get that 
 $$
 \#{\mathcal A}_4(x)\ll x\sum_{\substack{w<a\le x\\ P(a)\le y}} \frac{1}{a}.
 $$
 The right--hand side above is 
 $$
 \ll x(\log x) \rho(u),
 $$
 where $u:=\log w/\log y$.  Indeed, keeping $u$ fixed, for $t\in [w,x]$, the number of $y$-smooth positive integers $a\le t$ is bounded above by $\Psi(t,t^{1/u})$. Here, $\Psi(x,y)$ counts the number of positive integers $n\le x$ with $P(n)\le y$. It follows from work of de Bruijn (see \cite{2}, Part I) that 
$$
\Psi(t,t^{1/u})\ll t\rho(u),
$$
provided $u\le (\log t)^{3/8-\varepsilon}$. For us, $u=(\varepsilon/\num{5000})\log_2 x$, while $\log w=\varepsilon (\log_2 x)^3$, so certainly $u\le (\log t)^{3/8-\varepsilon}$ holds with any $\varepsilon\in (0,1/10)$ 
and any $t\ge w$ provided $x$ is sufficiently large. The above estimate involving the sum of the reciprocals of such $a$ then follows by Abel's summation. Furthermore, since $\rho(u)=u^{(1+o(1))u}$ as $u\to\infty$ (for this estimate, see de Bruijn \cite{1}), we get that $\rho(u)=(\log x)^{-(1+o(1))\log_3 x}$ as $x\to\infty$.

In particular, for large $x$ we get that $\rho(u)<(\log x)^{-3}$, so 
 $$
 \#{\mathcal A}_4(x)\ll \frac{x}{(\log x)^2},
 $$
 as we wanted. These complete the proof of Theorem~\ref{main}.  $\square$
 

 \section{Comments and some other related results}

 In this subsection, we give an idea of how to go about showing that $\Mav(\cdot)$ is actually bounded.

\subsection{Improving the arguments}

 Massaging the arguments used in the proof of Theorem~\ref{main} gives a bit more. Namely, let again $n=a(n)b(n)$, where $P(a(n))\le y$. The above argument shows that 
 $$
 \sum_{n\le x} M(n)=\sum_{n\in {\mathcal S}} 7M(a(n))+O\left(\frac{x}{\log x}\right),
 $$
 where ${\mathcal S}$ is a subset of $[1,x]$ on which $a(n)\le \exp((\log_3 x)^2)=z$. We ignore the parameter $\varepsilon>0$ for now.  This is more of a heuristic analysis. We fix $a\le z$, a $y$-smooth integer, and count the number of $n\le x$ such that $a=a(n)$. These  $n$ have the property that $n=ab\le x$, so $b\le x/a$ and the smallest prime factor of $b$ is $>y$. The number of such $b$ is, by the sieve, $\ll x/a\log y$. Thus, 
 $$
 \frac{1}{x} \sum_{n\le x} M(n)\ll \sum_{\substack{a\le z\\ P(a)\le y}} \frac{M(a)}{a\log y}+O\left(\frac{1}{\log x}\right).
 $$
 Using Abel's summation formula, we get
 \begin{eqnarray*}
 {\Mav}(x) & \ll & \sum_{\substack{a\le z\\ P(a)\le y}} \frac{M(a)}{a\log y}+O\left(\frac{1}{\log x}\right)\\
 &  \ll & \frac{\sum_{t\le z} M(t)}{z\log y}+\int_2^z \frac{(\sum_{s\le t} M(s))d\Psi(t,y)}{t^2}+\frac{1}{\log x}\\ 
 & \ll & \max_{t\le z} {\Mav}(t)\left(\frac{1}{\log y}+\int_2^z \frac{d\Psi(t,y)}{t\log y}+O\left(\frac{1}{\log x}\right)\right).
 \end{eqnarray*}
 The integral inside has the same order of magnitudes as 
 $$
 \sum_{\substack{n\le z\\ P(n)\le y}} \frac{1}{n\log y}\le \frac{1}{\log y}  \prod_{p\le y} \left(1-\frac{1}{p}\right)^{-1}\ll 1.
 $$
 Thus, letting 
 $$
 M'_{av}(t):=\max_{2\le s\le t} \{M_{av}(s)\},
 $$
 the above argument gives 
 $$
 M'_{av(x)}\le C M'_{av}(z).
 $$
 Here, $C>0$ is some universal constant. The above argument assumes $x>x_0$ for some absolute constant $x_0$. We now look at the sequence $x_1:=x,~x_2:=z=\exp((\log_2 x_1)^3)$, and in general, \mbox{given $x_k$}, we take $x_{k+1}:=\exp((\log_2 x_k)^3)$ as long as $x_k>x_0$. Letting $L$ be the number of terms of the above sequence, or the number of iterates of the function $x\mapsto \exp((\log_2 x)^3)$ to get to $x_0$, \mbox{we then get that}
 $$
 M'_{av}(x)\ll C^L.
 $$
 Since $L$ grows more slowly than any fixed iterate of $\log x$, it follows that 
 $$
 M'_{av}(x)\ll \log_ \ell x
 $$
 for any positive integer $\ell$ provided $x>x_{\ell}$. So, maybe it is true that ${\Mav}(x)$ (therefore also $M'_{av}(x)$) is bounded. We leave this as a task for the reader. However, we think we have enough evidence to claim it here. 

\vspace{0.1in} 
\begin{conj} The sequence $\{CO_{av}\}_n$ is monotone and bounded. Its limit is the same as for $\{\Mav(n)\}_n$.
\end{conj}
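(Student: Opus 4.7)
The plan is to split the conjecture into three sub-claims: boundedness of $\{CO_{av}(n)\}_n$, equality of limits with $\{\Mav(n)\}_n$, and monotonicity. For boundedness, I would upgrade the heuristic sketch of the previous subsection into a rigorous statement, carried out on odd integers. Since $M(2m)=M(m)$, one has
\[
CO_{av}(n) = \frac{1}{2^{n-1}}\sum_{\substack{2^{n-1}\le k<2^n\\ k\text{ odd}}} M(k),
\]
and writing each such $k$ as $k=a(k)b(k)$ with $P(a(k))\le y$ and $b(k)$ built from primes $>y$, the argument of Theorem~\ref{main} yields $M(b(k))\le 7$ outside an exceptional set of size $O(x/\log x)$, and Lemma~\ref{lem:1} gives $M(k)\le 7\, M(a(k))$ on the rest. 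The sieve count $\#\{k\le x:a(k)=a\}\ll x/(a\log y)$ combined with Abel summation produces the functional inequality $CO'_{av}(x)\le C\, CO'_{av}(z)$ with $z=\exp((\log_2 x)^3)$, where $CO'_{av}$ denotes the running maximum. To close the loop and obtain an honest constant, I would refine the smooth-part estimate: instead of bounding $M(a)$ by $\log a$, I would note that most prime factors of $a$ lie in $\mathcal P_2$ (Hasse) and apply Lemma~\ref{lem:2} to show $\sum_{a\le z,\,P(a)\le y} M(a)/a \ll \log y$, which collapses the iteration into a single bound.

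For the equality of limits, the telescoping identity~(\ref{eq1oda}) together with Stolz--Ces\`aro directly gives $\lim C_{av}(n) = \lim CO_{av}(n)$ whenever the right side exists, since the weighted average on the right places asymptotically all weight on the tail. To pass from $C_{av}(n)=\Mav(2^n)$ to general $\Mav(x)$, I would write $2^n\le x<2^{n+1}$ and estimate the discrepancy
\[
\bigl|\Mav(x)-\Mav(2^n)\bigr|\ll \frac{1}{x}\sum_{2^n<k\le x} M(k)+\frac{|x-2^n|}{x}\,\Mav(2^n),
\]
which by Theorem~\ref{main} is $O((\log_2 x)^3/\log x)$, hence vanishes. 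Boundedness of $\Mav$ (once established) would then transfer to $CO_{av}$ and vice versa.

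The main obstacle is monotonicity, which is a genuine inequality on finite sums and is not accessible by the asymptotic machinery above. My first attempt would be a pairing argument: map each odd $m\in [2^{n-1},2^n)$ to the odd integer $2m+1\in[2^n,2^{n+1})$, observe that $M(2m+1)\le M(m)+1$ by Lemma~\ref{lem:1} applied to $1$ and $m$, and try to show that on average the gain is positive because $2m+1$ typically acquires a new large prime factor outside $\mathcal P_2$. The difficulty is that $M$ is not pointwise monotone under such maps, so one must control the (small) fraction of $m$ on which $M(2m+1)<M(m)$. A weaker fallback, which together with boundedness still implies convergence, would be to prove asymptotic monotonicity $CO_{av}(n+1)\ge CO_{av}(n)-o(1)$; even that appears delicate, and I expect that proving true term-by-term monotonicity requires a combinatorial identity on binary expansions of odd multiples that I do not yet see. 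This is where the real work lies.
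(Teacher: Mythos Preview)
The statement is a \emph{conjecture} in the paper; no proof is given there, only the heuristic iteration preceding it. So there is nothing to compare against, and your proposal must stand on its own. It does not.

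On boundedness, your key step is the claim that Hasse's density result together with Lemma~\ref{lem:2} yields $\sum_{a\le z,\,P(a)\le y} M(a)/a \ll \log y$. Lemma~\ref{lem:2} requires $\gcd(\ell_2(p),\ell_2(q))=1$, not merely that $p,q\in\mathcal P_2$; two primes in $\mathcal P_2$ both have even order, so the gcd hypothesis fails automatically. Without a replacement for this step you are back to the paper's heuristic iteration $M'_{av}(x)\le C\,M'_{av}(z)$, which yields $C^L$ with $L\to\infty$ and hence no bounded constant.

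On the equality of limits, two errors. First, $C_{av}(n)\ne \Mav(2^n)$; the former averages over $[2^{n-1},2^n)$, the latter over $[1,2^n]$. Second, your discrepancy estimate is off by a factor of $\log x$: for $2^n\le x<2^{n+1}$ one has $|x-2^n|/x$ bounded below by a constant in the worst case, so the term $\frac{|x-2^n|}{x}\Mav(2^n)$ is $O((\log_2 x)^3)$ by Theorem~\ref{main}, not $O((\log_2 x)^3/\log x)$. Your interpolation therefore presupposes the very boundedness you are trying to establish.

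On monotonicity, the inequality $M(2m+1)\le M(m)+1$ does not follow from Lemma~\ref{lem:1}: that lemma bounds $M$ of a \emph{product}, and $2m+1$ is not a multiple of $m$. Whatever truth the inequality may have for Mersenne-type chains, the stated justification is vacuous, and you correctly concede that the averaged comparison $CO_{av}(n+1)\ge CO_{av}(n)$ remains out of reach. In sum, the proposal is an outline of difficulties rather than a proof, and the parts presented as settled contain genuine gaps.
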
 

\subsection{Powers of primes}
 
 In this section we prove a similar result that relates to the one in Lemma 2. In this case, we are not assuming that  $\gcd(a,b)=1$, but $a=b$ is a prime. 
 
\begin{theorem}\label{main2} For every $k\in \mathbb N$,  $M(p)=M(p^k)$  holds when $p$ is not a Wieferich prime. 
\end{theorem}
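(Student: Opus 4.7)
The inequality $M(p^k)\ge M(p)$ is immediate because every multiple of $p^k$ is a multiple of $p$, so the substance is the reverse inequality $M(p^k)\le M(p)$, which I would prove by induction on $k$ (the case $k=1$ being trivial, and the case $p=2$ being handled separately since $M(2^k)=1$). For the inductive step, set $m:=M(p)$ and $d:=\ell_2(p)$, and use the induction hypothesis to pick distinct exponents $\alpha_1,\ldots,\alpha_m$ with $p^{k-1}\mid N$, where $N:=\sum_{i=1}^m 2^{\alpha_i}$. The plan is to lift $N$ to some $\widetilde N:=\sum_{i=1}^m 2^{\alpha_i+c_i\,dp^{k-2}}$ of Hamming weight still $m$ that is divisible by $p^k$, by a careful choice of non-negative integers $c_1,\ldots,c_m$.

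The non-Wieferich hypothesis enters through a multiplicative order computation. Applying LTE to $x=2^d$, $y=1$ (or equivalently the binomial expansion argument used in the Introduction for the primes of $\mathcal{P}_2$), the assumption $2^d\not\equiv 1\pmod{p^2}$ is equivalent to $v_p(2^d-1)=1$, and a standard induction then yields $\ell_2(p^j)=p^{j-1}d$ for every $j\ge 1$. Consequently $v:=2^{dp^{k-2}}\bmod p^k$ has order exactly $p$, so one can write
$$v\;=\;1+p^{k-1}r\qquad\text{with}\qquad \gcd(r,p)=1.$$
Expanding $v^{c_i}\equiv 1+c_ip^{k-1}r\pmod{p^k}$ and summing gives
$$\widetilde N\;\equiv\;N+p^{k-1}r\sum_{i=1}^m c_i\,2^{\alpha_i}\pmod{p^k}.$$
Writing $N=p^{k-1}N_1$, the divisibility $p^k\mid\widetilde N$ therefore reduces to the single linear congruence
$$\sum_{i=1}^m c_i\,2^{\alpha_i}\;\equiv\;-r^{-1}N_1\pmod{p},$$
which I would solve by taking $c_2=\cdots=c_m=0$ and choosing $c_1$ in the unique residue class modulo $p$ determined by $2^{\alpha_1}\,c_1\equiv -r^{-1}N_1\pmod p$; note that $2^{\alpha_1}$ is a unit modulo $p$, so this step is automatic.

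The remaining detail is distinctness of the lifted exponents. Because $c_1$ is pinned down only modulo $p$, I may replace it by $c_1+sp$ for any $s\ge 0$, which shifts $\alpha_1+c_1dp^{k-2}$ by $sdp^{k-1}$; choosing $s$ large enough drives the lifted first exponent past $\alpha_m$, so the $m$ new exponents remain pairwise distinct while the congruence is preserved. The main technical point I expect to justify is that $r$ is indeed a unit modulo $p$, and this is precisely where the non-Wieferich assumption is essential: for a Wieferich prime one has $v\equiv 1\pmod{p^k}$, the first-order perturbation vanishes identically, and this particular lifting strategy collapses to the identity.
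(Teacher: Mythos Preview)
Your argument is correct and follows the same route as the paper's proof: write $2^{(\text{order})}=1+(\text{unit})\cdot p^{k-1}$, expand the shifted sum modulo $p^{k}$ to reduce the lifting to a single linear congruence mod $p$, solve it, and then push one exponent out far enough to guarantee distinctness. The only cosmetic differences are that you work with $d=\ell_2(p)$ where the paper uses $p-1$, and you perturb a single exponent whereas the paper phrases it as choosing all the $x_i$ (but then notes one may fix all but one arbitrarily); neither changes the substance. One small point of exposition: your sentence linking the non-Wieferich hypothesis to $v_p(2^{d}-1)=1$ tacitly uses that $v_p(2^{p-1}-1)=v_p(2^{d}-1)$ (an LTE consequence since $(p-1)/d$ is prime to $p$); this is standard but worth stating explicitly.
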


\proof First we observe tha $M(2^k)=1$, and so we may assume that $p$ is odd. Since every multiple of $p^k$ is also a multiple of $p$, we get that 

\begin{equation}\label{ineq1}
M(p)\le M(p^k)
\end{equation}

\n for any $k\ge 1$. We now show the converse assuming $p$ is not Wieferich. Let $M(p)=\ell\ge 2$ and 
$$
pm=2^{\alpha_1}+\cdots+2^{\alpha_{\ell}}
$$
be a multiple of $p$ of Hamming weight $\ell$. Because $p$ is not a Wieferich prime, \mbox{there exists  $\lambda\not\equiv 0\ \pmod p$}, such that

\begin{equation}\label{eqW}
2^{p-1}=1+\lambda p.
\end{equation} 
Let $x_1,\ldots,x_{\ell}$ be any nonnegative integers. Then 
\begin{eqnarray*}
&& 2^{\alpha_1+x_1(p-1)}+2^{\alpha_2+x_2(p-1)}+\cdots+2^{\alpha_{\ell}+x_{\ell}(p-1)}\\
& = & 2^{\alpha_1} (2^{p-1})^{x_1}+\cdots+2^{\alpha_{\ell}}(2^{p-1})^{x_{\ell}}\\
&= & 2^{\alpha_1}(1+\lambda p)^{x_1}+2^{\alpha_2} (1+\lambda p)^{x_2}+\cdots+2^{\alpha_{\ell}}(1+\lambda p)^{x_{\ell}} \\
& \equiv & (2^{\alpha_1}+\cdots+2^{\alpha_{\ell}})+\lambda p(x_12^{\alpha_1}+x_2 2^{\alpha_2}+\cdots+x_{\ell} 2^{\alpha_{\ell}}) \pmod {p^2}\\
& \equiv &p\left[m+\lambda (x_1 2^{\alpha_1}+x_2 2^{\alpha_2}+\cdots+x_{\ell} 2^{\alpha_{\ell}})\right ]\pmod {p^2}.
\end{eqnarray*}
Since $p\nmid \lambda$, it is easy to see that we can choose $x_1,\ldots,x_{\ell}$ modulo $p$ such that 
$$
x_1 2^{\alpha_1}+x_22^{\alpha_2}+\cdots+x_{\ell} 2^{\alpha_{\ell}}\equiv -m\lambda^{-1}  \pmod p
$$
(for example, we can fix $x_1,\ldots,x_{\ell-1}$ arbitrarily modulo $p$ then fix $x_{\ell}$ defined by the above congruence). Then by making the $x_i$ arbitrarily large without changing their congruence classes \mbox{modulo $p$,} we may assume that the numbers $\alpha_i+(p-1)x_i$ are distinct for $i=1,\ldots,\ell$. This shows that $M(p^2)\le M(p)$. Hence, $M(p)=M(p^2)$. The general case follows easily by induction. Let us point out that in the inductive step, we use instead of (\ref{eqW}), the identity

\begin{equation}\label{eqW1}
2^{p^{k-1}(p-1)}=1+\lambda' p^{k},\  k\ge 1.
\end{equation} 
\n This is simply a consequence of  (\ref{eqW}) and the binomial theorem. The argument in the induction step goes the same way having (\ref{eqW1}). $\square$

\vspace{0.3in}

\n {\bf Remarks:} In general, let $e_p$ be such that $p^{e_p}\| 2^{p-1}-1$. Then the argument used in the above proof shows that $M(p^k)=M(p^{e_p})$ for all $k\ge e_p$. In particular, for $p=1093$, we have that $p^2\mid 2^{(p-1)/2}+1$, so $M(p)=M(p^2)$. Since 
$p^2\| 2^{p-1}-1$ for this $p$, we have that $M(p^k)=M(p)$ for all $k\ge 1$. \mbox{For $p=3511$,} one can check that $p$ does not divide $2^{(p-1)/2}+1$ and the choice $1+2^{585}+2^{1170}$  is a multiple of $p^2$. As a result, we have  $3=M(p)=M(p^2)$ and since $p^2\| 2^{p-1}-1$, it follows that $M(p^k)=M(p)$ for all $k\ge 1$.

\section{About generating the data}

Our algorithm for $M(n)$, is based first on the following two observations:

\begin{enumerate} 
\item If $n$ is even, then $\frac{n}{2}$ in binary is the same as $n$ with a trailing $0$. As we had observed in the introduction $M(n) = M(n/2)$. This can be applied multiple times until $n$ is odd.

\item If $n = 2^p – 1$, we also observed in the introduction that $M(n) = s_2(n)=p$.

\end{enumerate}

In finding $M(n)$, our goal is to determine the minimum number of terms of the form $2^{\alpha_i}$ such that $\alpha_i$ are non-negative distinct integers and whose sum is the odd number $kn$ for some positive integer $k$. Se we may assume that 
$$2^{\alpha_1} + 2^{\alpha_2} + 2^{\alpha_3} + \ldots + 1 \equiv 0 \pmod{n}\ \ \text{or}$$
 
$$ 2^{\alpha_1} + 2^{\alpha_2} + \ldots + 2^{\alpha_{(r-1)}} \equiv –1 \pmod{n} \equiv n – 1 \pmod{n}.$$
We will check this since it reduces the number of terms to check by $1$; we must add $1$ to the result as a consequence.   

To determine the minimum number of terms needed, we generate the multiplicative cyclic group determined by the powers of $2$ modulo $n$.  We incrementally try all possible summations of one term, then two terms, then three terms, and so on until one summation equals $n – 1 \pmod{n}$. For faster computation, all values with $r$-many terms will be used to generate all values with $(r+1)$-many terms by summing each with the initially generated terms from the cyclic group. 

To avoid unnecessary computations, we are only interested in sums that are  not generated in previous iterations. We also manage different sets of sums to separate old values from newly discovered values. When we search for $n–1$, we will only look at the newly discovered values. Also, we perform a check for values of $n$ of the form $n = 2^p – 1$ before any computation; these have a known value, $M(2^p – 1) = p$, and would be expensive to compute otherwise. A final step taken for the sake of performance is if $n – 1$ is found during sum computation, which takes place in the function called ``$g$" below, then further searching is halted and the result is returned. The following is our algorithm. This version does not provide a value for $k$ for which $s_2(kn) = M(n)$ and may be optimized further with caching, for example.
\par \vspace{0.3in}

  {\bf Algorithm for $M(n)$}:
\begin{itemize}
\item If n is even, calculate $M(\frac{n}{2})$ as it yields the same result.
\item If $n = 1$, return $1$.
\item If $n + 1$ is a power of $2$, say $n = 2^p - 1$, return $p$.
\item Set $r := 2$, which will be the working result.
\item Set $A$ to be the subgroup generated by $2$ modulo $n$  (this will remain unchanged throughout the algorithm).
\item If $A$ contains $n – 1$, return $r$, which will be $2$.
\item Set $C := A$, which will contain all values reachable by adding $(r – 1)$-many terms from $A$ together.
\item Set $B := A$, which will contain all newly discovered values reachable by adding $(r – 1)$-many terms from $A$ together (those not already in $C$).
\item Loop forever (until the loop is told to break): 
\begin{itemize}
\item [$\circ$] Increment $r$.
\item[$\circ$] Get {\it``didFindTarget"} and $D$ from function $g(B, A, n)$, which is defined as returning $D:=A+B$ unless $n-1$ is found, in which case the computation is halted and the variable  {\it``didFindTarget"} is set to True.
\item [$\circ$]  If  {\it``didFindTarget"} is true while executing $g$, break the loop.
\item  [$\circ$]  Set $B:=D\setminus C$, as now $B$ contains only newly found values.
\item  [$\circ$]  Set $C:=C \cup D$.
\end{itemize}
\item Return $r$.
\end{itemize}
This algorithm is now presented below as Python code:
 \begin{code}
import math
# Helper function:
def f(n):
r = {1}
w = 2
while w != 1:
r.add(w)
w = (w * 2) 
return r

# Helper function:
def g(B, A, n):
for b in B:
if (n - 1 - b) in A: return True, None
R = B.copy()
for b in B:
for a in A:
w = (a + b) 
if w == n - 1: return True, None
R.add(w)
return False, R

def M(n):
if n == 0: return 0
while n 
if n == 1: return 1
if math.log2(n + 1).is_integer(): return int(math.log2(n + 1))
r = 2
A = f(n)
if n - 1 in A: return r
B = A
C = A
while True:
found, D = g(B, A, n)
r += 1
if found: break
B = D - C
C = C.union(D)
return r
\end{code}

\par \vspace{0.3in}

We also used Mathematica to generate part of the data (to match the outputs) and the graphs. The idea of the program is basically the same as the one above (except we do not exclude the values of $n$ of the form $2^p-1$ and do not separate the old from the new values generated by having more terms in the summations). The difference between the two is quite important in performance since with this code the time to get all values of $M(n)$ less than $\num{20000}$ is quite big. We include the Mathematica code next.  

\begin{code}
P2mN[n_] := Module[{i, m, A, q}, A = {};
  m = EulerPhi[n];
  Do[A = Union[A, {Mod[2^i, n]}], {i, 0, m}];
  q = MemberQ[A, n - 1]; {A, q}]
\end{code}
\begin{code}
AddTwoSets[A_, B_, n_] := 
 Module[{E, na, nb, i, j, q}, E = {}; na = Length[A]; nb = Length[B];
  Do[E = Union[E, {Mod[A[[i]] + B[[j]], n]}], {i, 1, na}, {j, 1, 
    nb}];
  q = MemberQ[E, n - 1]; {E, q}]
\end{code}
\begin{code}
StripOdd[n_] := Module[{m, x, o}, m = n; x = Mod[m, 2]; o = 0;
  If[x == 1, o = 1]; 
  While[o == 0, m = m/2; x = Mod[m, 2]; If[x == 1, o = 1];]; m]
\end{code}
\begin{code}
MF[n_] := Module[{m, i, a, A, AA, B, X, O}, m = StripOdd[n];
  If[m == 1, O = 1];
  If[m > 1, AA = P2mN[m]; A = AA[[1]]; B = A; a = AA[[2]]; i = 2; 
   O = 2;
   While[a == False, X = AddTwoSets[A, B, m]; B = X[[1]]; a = X[[2]]; 
    i = i + 1; O = i;   ];]; O]
\end{code}
 The first 1000 values then can be found in a couple of minutes. 
\begin{code}
Table[MF[n], {n, 1, 1000}]
  \end{code}
$$\left[ \begin{array}{ccccccccccccccccccccccccc}
 1 & 1 & 2 & 1 & 2 & 2 & 3 & 1 & 2 & 2 & 2 & 2 & 2 & 3 & 4 & 1 & 2 & 2 & 2 & 2 & 3 & 2 & 3 & 2 & 2 \\
 2 & 2 & 3 & 2 & 4 & 5 & 1 & 2 & 2 & 3 & 2 & 2 & 2 & 3 & 2 & 2 & 3 & 2 & 2 & 4 & 3 & 3 & 2 & 3 & 2 \\
 4 & 2 & 2 & 2 & 3 & 3 & 2 & 2 & 2 & 4 & 2 & 5 & 6 & 1 & 2 & 2 & 2 & 2 & 3 & 3 & 3 & 2 & 3 & 2 & 4 \\
 2 & 3 & 3 & 3 & 2 & 2 & 2 & 2 & 3 & 4 & 2 & 3 & 2 & 4 & 4 & 3 & 3 & 5 & 3 & 3 & 2 & 2 & 3 & 2 & 2 \\
 2 & 4 & 3 & 2 & 4 & 2 & 2 & 2 & 2 & 3 & 3 & 3 & 2 & 2 & 3 & 2 & 4 & 2 & 3 & 4 & 2 & 2 & 4 & 5 & 2 \\
 6 & 7 & 1 & 2 & 2 & 2 & 2 & 3 & 2 & 4 & 2 & 2 & 3 & 2 & 3 & 3 & 3 & 3 & 2 & 2 & 3 & 3 & 2 & 2 & 4 \\
 3 & 2 & 4 & 3 & 5 & 3 & 2 & 3 & 3 & 2 & 3 & 2 & 2 & 2 & 4 & 2 & 3 & 3 & 2 & 4 & 2 & 2 & 2 & 3 & 3 \\
 2 & 2 & 4 & 2 & 4 & 2 & 3 & 3 & 3 & 2 & 5 & 3 & 3 & 6 & 3 & 3 & 2 & 2 & 2 & 4 & 3 & 2 & 2 & 3 & 2 \\
 2 & 2 & 3 & 4 & 2 & 3 & 3 & 2 & 2 & 4 & 2 & 2 & 3 & 2 & 4 & 2 & 5 & 2 & 3 & 3 & 4 & 3 & 4 & 3 & 4 \\
 2 & 2 & 2 & 2 & 3 & 3 & 2 & 4 & 4 & 3 & 2 & 3 & 3 & 3 & 4 & 2 & 2 & 2 & 2 & 3 & 4 & 3 & 5 & 2 & 2 \\
 2 & 6 & 3 & 7 & 8 & 1 & 2 & 2 & 3 & 2 & 3 & 2 & 3 & 2 & 2 & 3 & 4 & 2 & 2 & 4 & 3 & 2 & 3 & 2 & 4 \\
 3 & 2 & 2 & 5 & 3 & 2 & 3 & 2 & 3 & 4 & 3 & 3 & 2 & 2 & 2 & 3 & 3 & 2 & 3 & 3 & 2 & 2 & 2 & 3 & 4 \\
 3 & 3 & 3 & 2 & 2 & 4 & 2 & 3 & 3 & 5 & 3 & 3 & 2 & 2 & 6 & 3 & 2 & 3 & 3 & 2 & 2 & 3 & 3 & 2 & 2 \\
 2 & 3 & 2 & 3 & 4 & 2 & 2 & 3 & 3 & 3 & 3 & 3 & 2 & 4 & 4 & 5 & 2 & 3 & 2 & 4 & 2 & 2 & 3 & 2 & 3 \\
 4 & 2 & 2 & 2 & 3 & 4 & 4 & 2 & 3 & 4 & 2 & 2 & 2 & 3 & 3 & 3 & 3 & 3 & 4 & 2 & 3 & 5 & 2 & 3 & 4 \\
 3 & 2 & 6 & 2 & 3 & 7 & 3 & 3 & 2 & 3 & 2 & 2 & 2 & 2 & 4 & 3 & 3 & 2 & 2 & 3 & 2 & 2 & 3 & 5 & 2 \\
 2 & 2 & 5 & 2 & 4 & 3 & 3 & 4 & 2 & 2 & 3 & 3 & 3 & 3 & 3 & 2 & 2 & 2 & 2 & 4 & 2 & 2 & 3 & 2 & 4 \\
 3 & 3 & 2 & 3 & 4 & 4 & 2 & 2 & 5 & 4 & 2 & 3 & 3 & 3 & 3 & 6 & 4 & 2 & 3 & 4 & 4 & 3 & 3 & 2 & 4 \\
 4 & 2 & 3 & 2 & 6 & 2 & 2 & 2 & 4 & 3 & 2 & 3 & 3 & 2 & 5 & 4 & 2 & 4 & 3 & 3 & 4 & 2 & 2 & 3 & 3 \\
 3 & 3 & 3 & 3 & 4 & 2 & 2 & 3 & 2 & 4 & 2 & 3 & 2 & 2 & 3 & 2 & 4 & 3 & 3 & 4 & 5 & 3 & 2 & 2 & 2 \\
 3 & 2 & 3 & 6 & 2 & 3 & 3 & 7 & 2 & 8 & 9 & 1 & 2 & 2 & 3 & 2 & 3 & 3 & 3 & 2 & 2 & 3 & 2 & 2 & 4 \\
\end{array}
\right]$$

$$\left[
\begin{array}{ccccccccccccccccccccccccc}

 3 & 5 & 2 & 3 & 2 & 2 & 3 & 2 & 4 & 3 & 2 & 2 & 2 & 3 & 4 & 2 & 3 & 3 & 2 & 2 & 3 & 2 & 2 & 4 & 4 \\
 3 & 3 & 3 & 2 & 4 & 2 & 2 & 5 & 3 & 3 & 4 & 2 & 2 & 3 & 2 & 2 & 6 & 3 & 2 & 4 & 2 & 3 & 3 & 3 & 3 \\
 2 & 2 & 2 & 3 & 2 & 3 & 3 & 3 & 3 & 4 & 2 & 2 & 3 & 5 & 3 & 3 & 2 & 2 & 2 & 4 & 2 & 3 & 3 & 3 & 4 \\
 4 & 3 & 2 & 3 & 3 & 3 & 3 & 2 & 3 & 2 & 3 & 4 & 2 & 2 & 4 & 3 & 2 & 3 & 2 & 5 & 3 & 3 & 4 & 3 & 2 \\
 2 & 2 & 2 & 3 & 6 & 3 & 3 & 2 & 2 & 7 & 3 & 3 & 3 & 3 & 2 & 2 & 2 & 2 & 3 & 4 & 3 & 3 & 2 & 2 & 2 \\
 5 & 2 & 2 & 3 & 3 & 2 & 4 & 3 & 2 & 4 & 2 & 2 & 4 & 2 & 3 & 3 & 3 & 3 & 4 & 3 & 4 & 3 & 2 & 3 & 4 \\
 2 & 2 & 4 & 3 & 4 & 2 & 5 & 2 & 2 & 2 & 3 & 3 & 2 & 2 & 4 & 2 & 2 & 6 & 2 & 3 & 3 & 4 & 2 & 4 & 3 \\
 2 & 4 & 3 & 2 & 4 & 2 & 3 & 2 & 2 & 3 & 3 & 4 & 5 & 4 & 3 & 2 & 3 & 3 & 3 & 4 & 3 & 2 & 3 & 2 & 2 \\
 2 & 3 & 3 & 2 & 3 & 4 & 3 & 2 & 3 & 4 & 3 & 2 & 4 & 2 & 2 & 4 & 3 & 3 & 5 & 2 & 2 & 2 & 3 & 3 & 4 \\
 3 & 3 & 2 & 2 & 3 & 6 & 2 & 2 & 3 & 3 & 2 & 7 & 4 & 3 & 8 & 3 & 3 & 2 & 2 & 3 & 4 & 2 & 2 & 2 & 5 \\
 2 & 4 & 2 & 3 & 4 & 3 & 3 & 3 & 3 & 2 & 2 & 2 & 2 & 3 & 3 & 3 & 2 & 2 & 2 & 4 & 3 & 2 & 5 & 3 & 2 \\
 4 & 2 & 3 & 2 & 3 & 5 & 3 & 2 & 2 & 4 & 2 & 3 & 3 & 3 & 3 & 4 & 2 & 2 & 6 & 2 & 2 & 3 & 3 & 3 & 4 \\
 3 & 2 & 3 & 2 & 3 & 4 & 2 & 3 & 2 & 3 & 2 & 5 & 2 & 3 & 4 & 2 & 2 & 2 & 2 & 2 & 3 & 3 & 2 & 2 & 4 \\
 3 & 3 & 2 & 3 & 4 & 2 & 2 & 3 & 2 & 4 & 4 & 4 & 3 & 2 & 2 & 2 & 4 & 5 & 3 & 4 & 3 & 2 & 4 & 3 & 3 \\
 3 & 2 & 3 & 3 & 3 & 4 & 6 & 2 & 4 & 4 & 2 & 3 & 3 & 7 & 4 & 2 & 4 & 3 & 3 & 3 & 3 & 3 & 2 & 5 & 4 \\
 3 & 4 & 3 & 2 & 2 & 3 & 2 & 2 & 3 & 6 & 4 & 2 & 2 & 2 & 4 & 2 & 3 & 4 & 3 & 3 & 2 & 2 & 3 & 3 & 2 \\
 3 & 3 & 2 & 2 & 5 & 3 & 4 & 3 & 2 & 4 & 4 & 3 & 3 & 3 & 3 & 2 & 4 & 3 & 2 & 6 & 2 & 2 & 3 & 4 & 3 \\
 3 & 3 & 2 & 3 & 3 & 3 & 3 & 3 & 3 & 4 & 5 & 2 & 2 & 2 & 3 & 3 & 3 & 2 & 4 & 4 & 2 & 2 & 3 & 3 & 4 \\
 2 & 2 & 2 & 4 & 3 & 4 & 2 & 3 & 4 & 2 & 3 & 3 & 3 & 3 & 4 & 3 & 5 & 2 & 3 & 3 & 2 & 2 & 2 & 4 & 2 \\
\end{array}
\right]$$

\end{document}

\vspace{0.2in}

\vspace{0.2in}

The above algorithm is presented below as Python code:

\vspace{0.2in}
  
\begin{code}
import math

# Helper function to generate subgroup generated by raising 2 to the p in
# integers mod n for integers p >= 0:
def f(n):
    r = {1}
    w = 2
    while w != 1:
        r.add(w)
        w = (w * 2) 
    return r
# Helper function that returns False, {(a + b) 
# unless the target value of n - 1 is found--return True, None in that case:
def g(B, A, n):

    for b in B:
        if (n - 1 - b) in A:
             return True, None

    R = set()

    for b in B:
        for a in A:
            R.add((a + b) 
   
    return False, R

def M(n):
    while n 
         n = n // 2
    if n == 1:
         return 1
      
    l = math.log2(n + 1)
    if l is_integer() and 2**l == n + 1:
         return int(math.log2(n + 1))
   
    r = 2
    A = f(n)
    if n - 1 in A:
        return r
    B = A
    C = A
    while True:
        found, D = g(B, A, n)
        r += 1
        if found:
             break
        B = D - C
        C = C.union(D)
    return r
\end{code}